\newtheorem{thm}{Theorem}[section]
\newtheorem{lem}[thm]{Lemma}
\newtheorem{prop}[thm]{Proposition}
\theoremstyle{definition}
\begin{document}

%
%
\newcommand{\BC}{\mathbb{C}}
\newcommand{\BR}{\mathbb{R}}
\newcommand{\BN}{\mathbb{N}}
\newcommand{\BZ}{\mathbb{Z}}

%
%
%

\title{A note on $G_q$-summability of formal solutions of some 
       linear $q$-difference-differential equations}

\author{{} \\
Hidetoshi \textsc{TAHARA}\footnote{Dept. of Information and 
        Communication Sciences, Sophia University, Tokyo 
        102-8554, Japan.
        e-mail: \texttt{h-tahara@sophia.ac.jp}}
~and Hiroshi \textsc{YAMAZAWA}\footnote{College of Engineering 
        and Design, Shibaura Institute of Technology, 
        Saitama 337-8570, Japan. 
         e-mail: \texttt{yamazawa@shibaura-it.ac.jp}}}

\date{}

\maketitle

%
%
%
%
\begin{abstract}
    Let $q>1$ and $\delta>0$. For a function $f(t,z)$, the $q$-shift 
operator $\sigma_q$ in $t$ is defined by $\sigma_q(f)(t,z)=f(qt,z)$.
This article discusses a linear $q$-difference-differential equation
$\sum_{j+\delta |\alpha| \leq m}
     a_{j,\alpha}(t,z)(\sigma_q)^j \partial_z^{\alpha}X
         =F(t,z)$
in the complex domain, and shows a result on the $G_q$-summability
of formal solutions (which may be divergent) in the framework of
$q$-Laplace and $q$-Borel transforms by Ramis-Zhang.
\end{abstract}

{\it Key words and phrases}: $q$-difference-differential equations,
       summability, formal power series solutions, 
                $q$-Gevrey asymptotic expansions.

{\it 2010 Mathematics Subject Classification Numbers}: 
       Primary 35C20; Secondary 35A01, 39A13.

\renewcommand{\thefootnote}{\fnsymbol{footnote}}

\footnote[0]{
      The first author is supported by JSPS KAKENHI Grant Number 
        JP15K04966.}

%
%
%
%

\section{Introduction}\label{section 1}

   Let $(t,z)$ be the variable in $\BC_t \times \BC_z^d$. Let $q>1$. 
For a function $f(t,z)$ we define a $q$-shift operator $\sigma_q$ 
in $t$ by $\sigma_q(f)(t,z)=f(qt,z)$.
\par
    In this note, we consider a linear $q$-difference-differential 
equation
\begin{equation}
   \sum_{j+\delta |\alpha| \leq m}
     a_{j,\alpha}(t,z)(\sigma_q)^j \partial_z^{\alpha}X
         =F(t,z)  \label{1.1}
\end{equation}
under the following assumptions:
\par
   (1) $q>1$, $\delta>0$ and $m \in \BN^*(=\{1,2,\ldots \})$;
\par
   (2) $a_{j,\alpha}(t,z)$ ($j+\delta|\alpha| \leq m$) and
$F(t,z)$ are holomorphic functions in a neighborhood of
$(0,0) \in \BC_t \times \BC_z^d$;
\par
   (3) (\ref{1.1}) has a formal power series solution
\begin{equation}
       X(t,z)= \sum_{n \geq 0} 
       X_n(z) t^n \in {\mathcal O}_R[[t]] \label{1.2}
\end{equation}
where ${\mathcal O}_R$ denotes the set of all holomorphic functions
on $D_R=\{z \in \BC^d \,;\, |z_i|<R \enskip (i=1,\ldots,d) \}$.
\par
\medskip
   Our basic problem is:
\par
\medskip
   {\bf Problem 1.1.}
    Under what 
condition can we get a true solution $W(t,z)$ of (\ref{1.1}) which
admits $\hat{X}(t,z)$ as a $q$-Gevrey asymptotic expansion of 
order 1 (in the sense of Definition 1.2 given below) ?

\par
\medskip
   For $\lambda \in \BC \setminus \{0\}$ and $\epsilon >0$ we set
\begin{align*}
    &{\mathscr Z}_{\lambda}
                   =\{- \lambda q^m \in \BC \,;\, m \in \BZ \}, \\
    &{\mathscr Z}_{\lambda, \epsilon}
    = \bigcup_{m \in \BZ} \{t \in \BC \setminus \{0\}\,;\, 
            |1+\lambda q^m/t| \leq \epsilon  \}.
\end{align*}
It is easy to see that if $\epsilon >0$ is sufficiently small 
the set ${\mathscr Z}_{\lambda, \epsilon}$ is a disjoint union 
of closed disks. For $r>0$ we write $D_r^*=\{t \in \BC \,;\,
0<|t|<r \}$. The following definition is due to 
Ramis-Zhang \cite{RZ}.

\par
\medskip
{\bf Definition 1.2.} 
    (1) Let $\hat{X}(t,z)
      =\sum_{n \geq 0}X_n(z)t^n \in {\mathcal O}_{R}[[t]]$
and let $W(t,z)$ be a holomorphic function on 
$(D_r^* \setminus {\mathscr Z}_{\lambda}) \times D_R$
for some $r>0$. We say that {\it $W(t,z)$ admits $\hat{X}(t,z)$ 
as a $q$-Gevrey asymptitoc expansion of order 1}, if there are 
$M>0$ and $H>0$ such that
\[
    \biggl| W(t,z)-\sum_{n=0}^{N-1} X_n(z)t^n \biggr|
      \leq \frac{M H^N}{\epsilon} q^{N(N-1)/2}|t|^N 
\]
holds on 
$(D_r^* \setminus {\mathscr Z}_{\lambda, \epsilon}) \times D_R$ 
for any $N=0,1,2,\ldots$ and any sufficiently small $\epsilon>0$.
\par
   (2) If there is a $W(t,z)$ as above, we say that the formal solution
$\hat{X}(t,z)$ is $G_q$-summable in the direction $\lambda$.

\par
\medskip
   A partial answer to Problem 1.1 was given in Tahara-Yamazawa 
\cite{yama}: in this paper, we will give an improvement of the 
result in \cite{yama}. As in \cite{yama}, we will use the framework of 
$q$-Laplace and $q$-Borel transforms via Jacobi theta function, 
developped by Ramis-Zhang \cite{RZ} and Zhang \cite{Z2}. 
\par
   Similar problems are discussed by Zhang \cite{Z1}, 
Marotte-Zhang \cite{MZ} and Ramis-Sauloy-Zhang \cite{RSZ} in the 
$q$-difference equations, and by Malek \cite{M1,M2}, 
Lastra-Malek \cite{LM} and Lastra-Malek-Sanz \cite{LMS} 
in the case of $q$-difference-differential equations. But,
their equations are different from ours.

%
%
%
%
%
\section{Main results}\label{section 2}

   For a holomorphic function $f(t,z)$ in a neighborhood of 
$(0,0) \in \BC_t \times \BC_z^d$, we define the order of the 
zeros of the function $f(t,z)$ at $t=0$ (we denote this by
$\mathrm{ord}_t(f)$) by
\[
    \mathrm{ord}_t(f) = \min \{ k \in \BN \,;\, 
               (\partial_t^kf)(0,z) \not\equiv 0 
               \enskip \mbox{near $z=0$} \}
\]
where $\BN=\{0,1,2,\ldots \}$.
\par
    For $(a,b) \in \BR^2$ we set $C(a,b)=\{(x,y) \in \BR^2\,;\,
x \leq a, y \geq b \}$. We define the $t$-Newton polygon
$N_t(\ref{1.1})$ of equation (\ref{1.1}) by
\[
    N_t(\ref{1.1})= \mbox{the convex hull of }
      \bigcup_{j+ \delta |\alpha| \leq m} 
               C(j, \mathrm{ord}_t(a_{j,\alpha})).
\]
\par
   In this note, we will consider the equation (\ref{1.1}) under the 
following conditions (A${}_1$) and (A${}_2$):
\par
\medskip
    (A${}_1$) There is an integer $m_0$ such that 
$0 \leq m_0<m$ and 
\[
        N_t(\ref{1.1})= \{(x,y) \in \BR^2 \,;\, x \leq m, \, 
                    y \geq \max\{0, x-m_0 \} \}.
\]
\par
   (A${}_2$) Moreover, we have
\[
    |\alpha|>0 \Longrightarrow 
     (j, \mathrm{ord}_t(a_{j,\alpha})) 
                     \in int(N_t(\ref{1.1})),
\]
where $int(N_t(\ref{1.1}))$ denotes the interior of the set
$N_t(\ref{1.1})$ in $\BR^2$. 
\par
   The figure of $N_t(\ref{1.1})$ is as in Figure \ref{Fig1}. 
In Figure \ref{Fig1}, the boundary of $N_t(\ref{1.1})$ consists 
of a horizontal half-line $\Gamma_0$, a segment $\Gamma_1$ and  
a vertical half-line $\Gamma_2$, and $k_i$ is the slope of 
$\Gamma_i$ for $i=0,1,2$.

\begin{figure}[htbp]
\begin{center}
%
\includegraphics[scale=0.6]{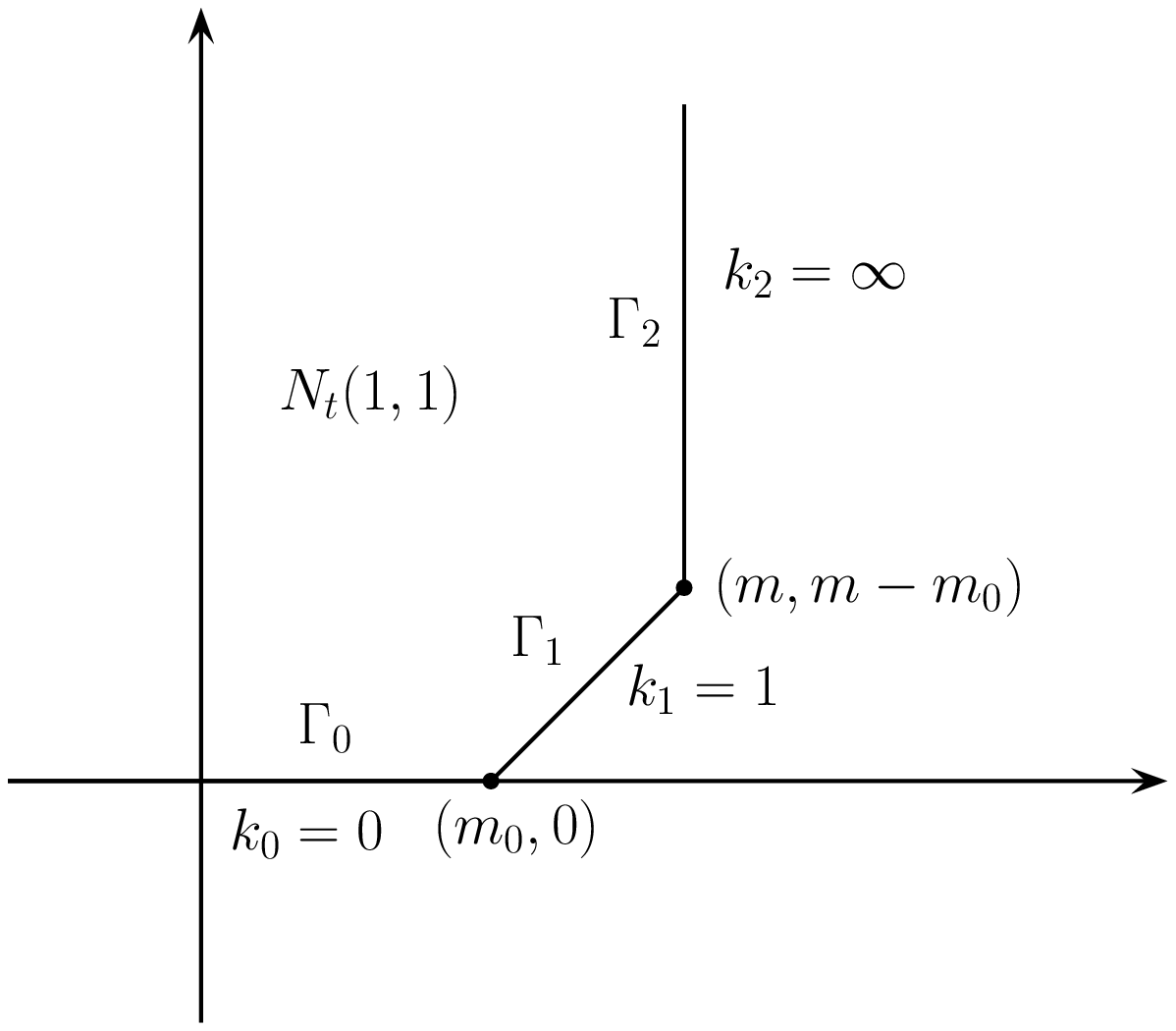}
%
%
\end{center}
\vspace{-3mm}
\caption{$t$-Newton polygon of $N_t(\ref{1.1})$} \label{Fig1}
\end{figure}

\begin{lem}\label{Lemma 2.1} 
    If {\rm (A${}_1$)} and {\rm (A${}_2$)} are satisfied, we have
\begin{equation}\label{2.1}
    \mathrm{ord}_t(a_{j,\alpha}) \geq 
    \left\{ \begin{array}{ll}
           \max \{0, j-m_0 \}, &\mbox{if $|\alpha|=0$},\\[2pt]
   \max \{1, j-m_0+1 \}, &\mbox{if $|\alpha|>0$}.
      \end{array}
          \right.
\end{equation}
\end{lem}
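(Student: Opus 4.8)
The plan is to read off the inequality \eqref{2.1} directly from the geometry of the Newton polygon described in (A${}_1$) and (A${}_2$). First I would recall that, by the very definition of $N_t(\ref{1.1})$ as the convex hull of the sets $C(j,\mathrm{ord}_t(a_{j,\alpha}))$, for each index $(j,\alpha)$ with $j+\delta|\alpha|\le m$ the point $(j,\mathrm{ord}_t(a_{j,\alpha}))$ lies in $N_t(\ref{1.1})$ (as soon as $a_{j,\alpha}\not\equiv 0$; if $a_{j,\alpha}\equiv 0$ we set $\mathrm{ord}_t(a_{j,\alpha})=+\infty$ and the inequality is trivial). By (A${}_1$), a point $(x,y)\in N_t(\ref{1.1})$ satisfies $y\ge\max\{0,x-m_0\}$. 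Applying this with $x=j$, $y=\mathrm{ord}_t(a_{j,\alpha})$ immediately yields
\[
    \mathrm{ord}_t(a_{j,\alpha})\ \ge\ \max\{0,\,j-m_0\},
\]
which is exactly the case $|\alpha|=0$ of \eqref{2.1}, and in fact holds for all $\alpha$.

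Next I would treat the case $|\alpha|>0$. Here (A${}_2$) tells us that $(j,\mathrm{ord}_t(a_{j,\alpha}))$ lies in the \emph{interior} of $N_t(\ref{1.1})$. Since $N_t(\ref{1.1})=\{(x,y):x\le m,\ y\ge\max\{0,x-m_0\}\}$, its interior is $\{(x,y):x<m,\ y>\max\{0,x-m_0\}\}$; because $\mathrm{ord}_t(a_{j,\alpha})$ is an integer, the strict inequality $\mathrm{ord}_t(a_{j,\alpha})>\max\{0,j-m_0\}$ upgrades to
\[
    \mathrm{ord}_t(a_{j,\alpha})\ \ge\ \max\{0,j-m_0\}+1\ =\ \max\{1,\,j-m_0+1\},
\]
which is the case $|\alpha|>0$ of \eqref{2.1}. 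I should also note that if $a_{j,\alpha}\equiv 0$ then $C(j,\mathrm{ord}_t(a_{j,\alpha}))=\emptyset$ contributes nothing, and \eqref{2.1} holds vacuously with the convention $\mathrm{ord}_t(0)=+\infty$.

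There is essentially no hard step here: the statement is a direct translation of the hypotheses, and the only points requiring any care are bookkeeping ones — handling the convention $\mathrm{ord}_t(a_{j,\alpha})=+\infty$ when $a_{j,\alpha}\equiv 0$, and using the integrality of $\mathrm{ord}_t$ to pass from ``interior'' (a strict inequality) to the $+1$ in the second line of \eqref{2.1}. If anything is worth spelling out, it is the precise description of $int(N_t(\ref{1.1}))$, so that the reader sees why membership in the interior forces $y\ge\max\{0,x-m_0\}+1$ rather than merely $y>\max\{0,x-m_0\}$; one should also remark that $\max\{0,j-m_0\}+1=\max\{1,j-m_0+1\}$, which is the only algebraic identity involved.
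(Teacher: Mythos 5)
Your argument is correct and is exactly the intended one: the paper states Lemma \ref{Lemma 2.1} without proof, treating it as an immediate consequence of (A${}_1$), (A${}_2$) and the definition of $N_t(\ref{1.1})$, which is precisely what you spell out (membership in $N_t(\ref{1.1})$ for $|\alpha|=0$, membership in the interior plus integrality of $\mathrm{ord}_t$ for $|\alpha|>0$). Your bookkeeping remarks on the convention $\mathrm{ord}_t(0)=+\infty$ and the identity $\max\{0,j-m_0\}+1=\max\{1,j-m_0+1\}$ are accurate and complete the argument.
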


   By the condition (\ref{2.1}), we have the expression
\begin{equation}\label{2.2}
    a_{j,0}(t,z)=t^{j-m_0} b_{j,0}(t,z) \quad 
           \mbox{for $m_0<j \leq m$} 
\end{equation}
for some holomorphic functions $b_{j,0}(t,z)$ 
($m_0<j \leq m$) in a neighborhood of $(0,0) \in \BC \times \BC_z^d$.
We suppose:
\begin{equation}\label{2.3}
    a_{m_0,0}(0,0) \ne 0 \quad \mbox{and} \quad
    b_{m,0}(0,0) \ne 0.
\end{equation}
We set
\begin{equation}\label{2.4}
     P(\tau,z)= \sum_{m_0<j \leq m}
            \frac{b_{j,0}(0,z)}{q^{j(j-1)/2}} \tau^{j-m_0}
              + \frac{a_{m_0,0}(0,z)}{q^{m_0(m_0-1)/2}}
\end{equation}
and denote by $\tau_1, \ldots, \tau_{m-m_0}$ the
roots of $P(\tau,0)=0$.  By (\ref{2.3}) we have $\tau_i \ne 0$ 
for all $i=1,2,\ldots,m-m_0$. The set $S$ of singular directions 
at $z=0$ is defined by 
\[
     S = \bigcup_{i=1}^{m-m_0}
       \{t= \tau_i \eta \,;\, \eta >0 \}.
\]
\par
    In \cite{yama}, we have shown the following result.

\begin{thm}[Theorem 2.3 in \cite{yama}]\label{Theorem 2.2}
    {\rm (1)} Suppose the conditions {\rm (A${}_1$)}, 
{\rm (A${}_2$)} and {\rm (\ref{2.3})}.
Then, if equation {\rm (\ref{1.1})} has a formal solution
$\hat{X}(t,z)= \sum_{n \geq 0}X_n(z)t^n \in {\mathcal O}_R[[t]]$,
we can find $A>0$, $h>0$ and $0<R_1<R$ such that
$|X_n(z)| \leq A h^n q^{n(n-1)/2}$ on $D_{R_1}$ for any
$n=0,1,2,\ldots$.
\par
    {\rm (2)} In addition, if the condition
\begin{equation}\label{2.5}
    \mathrm{ord}_t(a_{j,\alpha}) \geq j-m_0+2, \quad
       \mbox{if $|\alpha|>0$ and $m_0 \leq j<m$}
\end{equation}
is satisfied, for any $\lambda \in \BC \setminus(\{0\} \cup S)$ 
the formal solution $\hat{X}(t,z)$ is $G_q$-summable in the 
direction $\lambda$. In other words, there are $r>0$, $R_1>0$ and 
a holomorphic solution $W(t,z)$ of (\ref{1.1}) on 
$(D_r^* \setminus {\mathscr Z}_{\lambda}) \times D_{R_1}$
such that $W(t,z)$ admits $\hat{X}(t,z)$ as a 
$q$-Gevrey asymptitoc expansion of order 1.
\end{thm}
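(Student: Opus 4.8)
Part (1) is a recursion-plus-majorant estimate, while part (2) realises the $G_q$-sum $W$ through the $q$-Borel--$q$-Laplace machinery of Ramis--Zhang \cite{RZ,Z2}, following \cite{yama}. For part (1) I would substitute $\hat X=\sum_{n\ge0}X_n(z)t^n$ into (\ref{1.1}), compare the coefficients of $t^n$, and make the normalisation $X_n(z)=q^{n(n-1)/2}Y_n(z)$; since $\sigma_q$ sends $t^k$ to $q^kt^k$, this is exactly the weight that converts the coefficient identity into a \emph{regular} recursion for the $Y_n$, whose leading part is the combination of $Y_n,\dots,Y_{n-m+m_0}$ read off from the polynomial $P(\tau,z)$ of (\ref{2.4}), its top coefficient $a_{m_0,0}(0,z)/q^{m_0(m_0-1)/2}$ being invertible near $z=0$ by (\ref{2.3}). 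By Lemma~2.1 the terms with $|\alpha|>0$, together with the right-hand side $F$, carry enough extra powers of $t$ that after this normalisation they contribute only geometrically small perturbations; a method of majorants in ${\mathcal O}_{R_1}[[t]]$ (for a suitable $R_1<R$) then yields $|Y_n(z)|\le Ah^n$ on $D_{R_1}$, i.e. $|X_n(z)|\le Ah^nq^{n(n-1)/2}$.

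For part (2) I would first apply the formal $q$-Borel transform of order $1$, $\mathcal B_q\bigl(\sum c_nt^n\bigr)(\xi)=\sum c_nq^{-n(n-1)/2}\xi^n$; by part (1) the series $\hat u(\xi,z):=\mathcal B_q\hat X=\sum_{n\ge0}Y_n(z)\xi^n$ converges for $|\xi|$ small and $z\in D_{R_1}$. Since $\mathcal B_q$ intertwines $\sigma_q$ in $t$ with $\sigma_q$ in $\xi$, and multiplication by $t$ with $\xi\sigma_q^{-1}$, equation (\ref{1.1}) transforms into a linear $q$-difference-differential equation for $\hat u$ whose coefficients are now \emph{entire} operators in $\xi$ and whose principal part is governed by $P(\tau,z)$, so that the analytic continuation of $\hat u$ off $\xi=0$ can be obstructed only over the rays through the roots $\tau_1,\dots,\tau_{m-m_0}$ of $P(\tau,0)=0$, that is, along $S$. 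Fixing $\lambda\in\BC\setminus(\{0\}\cup S)$, I would choose a small open sector $\Sigma_\lambda$ bisected by $\arg\lambda$ and avoiding the rays through the $\tau_i$ (hence missing the zeros $\tau_iq^{\BZ}$), rewrite the Borel-plane equation on $\Sigma_\lambda\times D_{R_1}$ as a fixed-point equation $\hat u=\mathcal N(\hat u)$ with $\mathcal N$ the inverse of the principal part (bounded on $\Sigma_\lambda$) composed with the remaining terms, and solve it by successive approximation in a Banach space of functions holomorphic on $\Sigma_\lambda\times D_{R_1}$ with a $q$-exponential weight of order $1$. The strengthened hypothesis (\ref{2.5}) enters precisely here: it gives the $\partial_z^\alpha$-terms with $|\alpha|>0$ one further power of $t$, hence of $\xi$, which is the margin needed to absorb the loss of $z$-radius caused by $\partial_z^\alpha$ and to make $\mathcal N$ a contraction (after shrinking $R_1$ and the opening of $\Sigma_\lambda$ if necessary). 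The resulting $\hat u$ is holomorphic on $\Sigma_\lambda\times D_{R_1}$ with $q$-exponential growth of order $1$, so the $q$-Laplace transform of order $1$ in the direction $\lambda$ can be applied to it; it produces $W(t,z)$, holomorphic on $(D_r^*\setminus{\mathscr Z}_\lambda)\times D_{R_1}$, which solves (\ref{1.1}) because $\mathcal B_q$ and $\mathcal L_q^\lambda$ intertwine the original operator with its Borel-plane counterpart, and which admits $\hat X$ as a $q$-Gevrey asymptotic expansion of order $1$ by the Watson-type properties of that transform.

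The hard part will be this global step of part (2): passing from the local function $\hat u$ near $\xi=0$ to a solution on all of $\Sigma_\lambda$ (in particular as $|\xi|\to\infty$) with the sharp $q$-exponential order-$1$ bound, i.e. controlling the accumulation of the lower-order perturbations over the infinitely many $q$-steps $\xi\mapsto q\xi$ while tracking the poles of the inverse principal part at $\tau_iq^{\BZ}$. This is exactly where (\ref{2.5}), rather than merely (A${}_2$), is indispensable; without it one recovers only the Gevrey estimate of part (1).
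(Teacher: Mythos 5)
Be aware that the paper you were given contains no proof of this statement: Theorem \ref{Theorem 2.2} is imported verbatim from \cite{yama} (Theorem 2.3 there), and the present note only \emph{uses} it (together with the proof of Proposition 5.6 of \cite{yama}, quoted here as Proposition \ref{Proposition 4.1}) to establish Theorem \ref{Theorem 2.3} via the substitution $t\mapsto t^2$, $q_1=q^{1/4}$. So the only comparison possible is with the Ramis--Zhang/\cite{yama} machinery that this note relies on, and your outline reproduces its architecture faithfully: part (1) by a weighted recursion with majorants, part (2) by formal $q$-Borel transform, analytic continuation of $u(\xi,z)$ into a sector avoiding the rays through the roots of $P(\tau,0)$, an order-one $q$-exponential growth bound, and then the theta-kernel (discrete) $q$-Laplace transform in the direction $\lambda$. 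The step you flag as hard is exactly what \cite{yama} supplies: there the continuation $u^*$ comes with a decomposition $u^*=\sum_n w_n$, $|w_n|\le AH^n|\xi|^n/q^{n(n-1)/2}$ (cf.\ Proposition \ref{Proposition 4.1}), and Ramis's result (Proposition \ref{Proposition 3.2}) converts this into the growth $\exp\bigl((\log|\xi|)^2/(2\log q)\bigr)$ needed for the $q$-Laplace kernel, evaluated along the spiral $\lambda q^{\BZ}$ --- which is why a bound of the shape (\ref{2.7}) is the operative growth condition rather than a continuous weight; your Banach fixed-point formulation is a legitimate variant of that successive-approximation scheme, and your use of (\ref{2.5}) as the margin absorbing the $\partial_z^{\alpha}$ loss is precisely its role there. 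One correction of emphasis: (\ref{2.5}) is indispensable only for this \emph{direct} argument; the very point of the present note (Theorem \ref{Theorem 2.3}) is that the $G_q$-summability conclusion survives without (\ref{2.5}), because the change of variable $t\mapsto t^2$ manufactures the analogue of (\ref{2.5}) for the transformed $q_1$-equation (\ref{4.1}).
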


   In this paper, we remove the additional condition
(\ref{2.5}) from the part (2) of Theorem \ref{Theorem 2.2}. We have

\begin{thm}\label{Theorem 2.3}
    Suppose the conditions {\rm (A${}_1$)}, {\rm (A${}_2$)} and 
{\rm (\ref{2.3})}.  Then, for any 
$\lambda \in \BC \setminus(\{0\} \cup S)$ the formal solution 
$\hat{X}(t,z)$ {\rm (}in {\rm (\ref{1.2})}{\rm )} is $G_q$-summable 
in the direction $\lambda$.
\end{thm}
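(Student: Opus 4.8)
The plan is to follow the $q$-Borel/$q$-Laplace strategy of \cite{yama} and \cite{RZ}, but to carry out the construction of the Borel-level solution in a more carefully chosen functional-analytic scale so that the extra hypothesis (\ref{2.5}) becomes unnecessary. By part (1) of Theorem \ref{Theorem 2.2} (which requires only (A${}_1$), (A${}_2$) and (\ref{2.3})), the coefficients of $\hat X$ satisfy $|X_n(z)|\le Ah^nq^{n(n-1)/2}$ on some $D_{R_1}$; hence the $q$-Borel transform of order $1$,
\[
   \phi(\xi,z)=\sum_{n\ge 0}\frac{X_n(z)}{q^{n(n-1)/2}}\,\xi^n ,
\]
converges in a full neighbourhood of $\xi=0$ and defines a holomorphic germ. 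First I would substitute $\hat X=\sum X_nt^n$ into (\ref{1.1}) and apply the formal $q$-Borel transform termwise. Using $\sigma_q(t^n)=q^nt^n$ and the elementary identity relating multiplication by $t$ on the $t$-side to the operator $\xi\mapsto q^{?}\xi$ (equivalently, the Hadamard-type product with the theta kernel) one gets a convolution $q$-difference equation for $\phi(\xi,z)$ of the schematic form
\[
   P(\xi\,\sigma_q^{\,\cdot},z)\,\phi=\psi(\xi,z)+(\text{lower-order / small terms}),
\]
where $P$ is exactly the polynomial in (\ref{2.4}) (its appearance is forced by (A${}_1$) together with the normalisation (\ref{2.3}) and the $q$-shifts), and $\psi$ is the $q$-Borel transform of $F$, which is entire of the appropriate $q$-exponential type.

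Next I would solve this equation for $\phi$ near $\xi=0$ and then continue it. The key structural point is that, because of (A${}_2$), every term coming from $|\alpha|>0$ lies in the \emph{interior} of the Newton polygon; after $q$-Borel transform these produce terms that are strictly \emph{regularising} in $\xi$ — gaining at least one power of $\xi$ relative to the principal part $P(\cdot,z)$ — so they can be absorbed by a fixed-point argument on the space of holomorphic functions on a small disk times $D_{R_1}$, uniformly. The previous paper needed (\ref{2.5}) precisely to get two gained powers and thereby a \emph{global} fixed-point estimate along the whole ray; the improvement I would aim for is to localise: solve the convolution equation by a majorant/Nagumo-norm iteration on shrinking disks $D_\rho\times D_{R_1}$, using that one gained power of $\xi$ already suffices because the theta-kernel convolution itself contributes a decaying factor $q^{-n(n-1)/2}$-type weight. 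This yields $\phi(\xi,z)$ holomorphic on a neighbourhood of $\xi=0$ and, after analytic continuation using the $q$-difference relation step by step (multiplying $\xi$ by $q$ at each step), holomorphic on $(\BC_\xi\setminus \widetilde{S}_\lambda)\times D_{R_1}$ where $\widetilde S_\lambda$ is a discrete $q$-spiral of bad points determined by the roots $\tau_1,\dots,\tau_{m-m_0}$ of $P(\tau,0)=0$, together with the crucial bound $|\phi(\xi,z)|\le C\exp\!\big(K(\log|\xi|)^2/(2\log q)\big)$, i.e. $q$-exponential growth of order $1$, away from $\widetilde S_\lambda$; this last step is where the roots $\tau_i$ — and the hypothesis $\lambda\notin\{0\}\cup S$ — enter, guaranteeing that the ray of direction $\lambda$ avoids the spiral $\widetilde S_\lambda$ so that a spiral neighbourhood (corresponding to $\mathscr Z_{\lambda,\epsilon}$ on the $t$-side) stays clear of singularities.

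Then I would define $W(t,z)$ as the $q$-Laplace transform of $\phi$ in the direction $\lambda$,
\[
   W(t,z)=\int_{0}^{\infty\cdot\lambda}\phi(\xi,z)\,\frac{d_q\xi}{\Theta_q(\xi/t)}
\]
(the Ramis–Zhang $q$-Laplace integral against the Jacobi theta function), which converges and is holomorphic on $(D_r^*\setminus\mathscr Z_\lambda)\times D_{R_1}$ thanks to the growth estimate just established and the classical decay of $\Theta_q$. Two things then remain, both of which are by now standard once the Borel-level solution is in hand: (i) verify that $W$ solves (\ref{1.1}) — this is a direct computation showing that $q$-Laplace intertwines the convolution $q$-difference equation for $\phi$ with the original equation, using $\sigma_q$ under the integral and the functional equation of $\Theta_q$; and (ii) verify that $W$ admits $\hat X$ as a $q$-Gevrey asymptotic expansion of order $1$ in the sense of Definition 1.2 — this follows from the standard tail estimate for the $q$-Laplace transform of a germ, giving exactly the bound $MH^Nq^{N(N-1)/2}|t|^N/\epsilon$ on $(D_r^*\setminus\mathscr Z_{\lambda,\epsilon})\times D_{R_1}$.

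The main obstacle, and the only genuinely new ingredient, is step two: obtaining the analytic continuation of $\phi$ along the direction $\lambda$ together with the order-$1$ $q$-exponential growth bound \emph{using only one gained power of $\xi$} from the $|\alpha|>0$ terms. In \cite{yama} the stronger hypothesis (\ref{2.5}) made the convolution operator a contraction in a norm adapted to the whole ray; without it I expect to need a more delicate iteration in which the convolution with the theta kernel is estimated against a weight of the form $H^n q^{n(n-1)/2}$ on the coefficients, exploiting that the $q$-Borel/$q$-Laplace pair is tailored exactly to this weight, and then to propagate the resulting bound through each $q$-dilation step of the analytic continuation while controlling the accumulation of constants geometrically. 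Everything else — Lemma \ref{Lemma 2.1}, the factorisation (\ref{2.2}), the definition (\ref{2.4}), the avoidance of $S$, and the verification that $W$ solves the equation and has the right asymptotics — is either already available or routine given the machinery of \cite{RZ} and \cite{Z2}.
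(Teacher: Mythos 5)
Your outline reproduces the general Ramis--Zhang $q$-Borel/$q$-Laplace machinery that is indeed the backdrop of both \cite{yama} and this paper, but it does not prove the theorem: the one step you yourself single out as ``the main obstacle, and the only genuinely new ingredient'' --- namely the analytic continuation of the Borel transform along the direction $\lambda$ with the order-one $q$-exponential growth bound using only \emph{one} gained power of $\xi$ from the $|\alpha|>0$ terms --- is exactly the point where the argument of \cite{yama} breaks down without (\ref{2.5}), and you offer no actual mechanism for it, only the expectation of ``a more delicate iteration'' in which the theta-kernel weight ``already suffices.'' That assertion is unsubstantiated and is precisely the crux; as written, the proposal replaces the missing estimate by a hope. (Some surrounding details are also shakier than you suggest: the continuation obtained in this setting is to a sector $S_{\theta}(\lambda)$ avoiding the zeros of $P(\cdot,0)$, not to the complement of a discrete $q$-spiral in all of $\BC_\xi$, and the needed conclusion is the discrete bound (\ref{2.7}) at the points $\lambda q^m$, which your sketch never pins down.)

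The paper closes this gap by an entirely different and much more elementary device, which you might compare with your plan: it never improves the Borel-plane iteration at all. Instead it substitutes $t\mapsto t^2$ and sets $q_1=q^{1/4}$, so that by Lemma \ref{Lemma 3.1} the equation (\ref{1.1}) becomes the $q_1$-difference-differential equation (\ref{4.1}) of order $2m$ with coefficients $A_{j,\alpha}(t,z)=a_{j,\alpha}(t^2,z)$. Since orders of vanishing in $t$ double, Lemma \ref{Lemma 2.1} gives $\mathrm{ord}_t(A_{j,\alpha})\ge 2j-2m_0+2$ whenever $|\alpha|>0$, i.e.\ the transformed equation automatically satisfies the analogue of the extra hypothesis (\ref{2.5}). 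Hence Theorem \ref{Theorem 2.2}(2) and the proof of Proposition 5.6 of \cite{yama} apply verbatim to (\ref{4.1}), yielding the continuation and growth of the $q_1$-Borel transform $u_1$, and the relations $u_1(\xi,z)=u(q^{-1/4}\xi^2,z)$ and $P_1(\lambda,z)=q^{-m_0/4}P(q^{-1/4}\lambda^2,z)$ transfer everything back to $u$, giving Proposition \ref{Proposition 2.4} and hence the theorem. If you want to salvage your route, you would have to actually construct and estimate the one-power-gain iteration along the ray --- which is an open analytic task your proposal does not carry out --- whereas the quadratic substitution sidesteps it completely.
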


   To prove this, we use the framework of $q$-Laplace and
$q$-Borel transforms developped by Rramis-Zhang \cite{RZ}.
By (1) of Theorem \ref{Theorem 2.2} we know that the formal 
$q$-Borel transform of $\hat{X}(t,z)$ in $t$
\begin{equation}\label{2.6}
    u(\xi,z)= \sum_{k \geq 0} \frac{X_k(z)}{q^{k(k-1)/2}}\xi^k
\end{equation}
is convergent in a neighborhood of $(0,0) \in \BC_{\xi} \times \BC_z^n$.
For $\lambda \in \BC \setminus \{0\}$ and $\theta>0$ we write
$S_{\theta}(\lambda)=\{\xi \in \BC \setminus \{0\} \,;\, 
|\arg \xi- \arg \lambda| <\theta \}$.  Then, to show 
Theorem \ref{Theorem 2.3} it is enough to prove the following 
result.

\par
\begin{prop}\label{Proposition 2.4}
    For any $\lambda \in \BC \setminus (\{0\} \cup S)$ there are 
$\theta>0$, $R_1>0$, $C>0$ and $H>0$ such that $u(\xi,z)$ has an 
analytic extension $u^*(\xi,z)$ to the domain 
$S_{\theta}(\lambda) \times D_{R_1}$ satisfying the following 
condition:
\begin{equation}\label{2.7}
   |u^*(\lambda q^m,z)| \leq C H^m q^{m^2/2}
    \quad \mbox{on $D_{R_1}$}, \quad m=0,1,2,\ldots.
\end{equation}
\end{prop}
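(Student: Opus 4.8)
The plan is to convert the $q$-difference-differential equation (\ref{1.1}) into an equation for the $q$-Borel transform $u(\xi,z)$ and to analyze that equation by a Newton-polygon/majorant argument along the ray $\arg\xi=\arg\lambda$. First I would apply the formal $q$-Borel transform in $t$ to (\ref{1.1}). Since $\mathcal{B}_q(\sigma_q^j f)$ and $\mathcal{B}_q(t^k f)$ transform into explicit operators in $\xi$ and the $q$-dilation $\xi\mapsto q\xi$ (recall that the $q$-Borel transform sends $t^n$ to $\xi^n/q^{n(n-1)/2}$, so $\sigma_q$ and multiplication by $t$ become ``$q$-shifts'' on the $\xi$-side), the condition (\ref{2.1}) from Lemma \ref{Lemma 2.1} guarantees that every coefficient $a_{j,\alpha}$ with $|\alpha|>0$ carries at least $t^{j-m_0+1}$, and hence on the Borel side the $\partial_z^\alpha$-terms all come with a genuine positive power of $\xi$ in front (this is exactly where (A${}_2$) is used, and it is what makes the new argument work without the old extra condition (\ref{2.5})). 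The transformed equation should take the shape $P(\xi,z)\,u = (\text{lower-order / higher-vanishing terms}) + (\text{known data})$, where $P(\xi,z)$ is, up to the $q^{\bullet}$-normalizations, the polynomial $P(\tau,z)$ of (\ref{2.4}) evaluated along the relevant dilated arguments, so that the zeros of $P$ are exactly the directions in $S$.

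The second step is to set up the analytic continuation and growth estimate. Fix $\lambda\notin\{0\}\cup S$ and a small aperture $\theta>0$ so that the sector $S_\theta(\lambda)$ avoids all rays through the roots $\tau_i$; then $P(\xi,z)$ is bounded below in modulus on $S_\theta(\lambda)\times D_{R_1}$ after shrinking $R_1$. The idea is to solve the Borel-transformed equation by a fixed-point/successive-approximation scheme: divide by $P$, express $u$ as a convergent series in the ``perturbation'' operator, and check that this operator is a contraction in a suitable Banach space of functions holomorphic on $S_\theta(\lambda)\times D_{R_1}$ equipped with a weighted sup-norm adapted to the expected $q$-exponential growth $q^{m^2/2}$ at the sampling points $\xi=\lambda q^m$. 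Concretely I would introduce norms of the form $\|v\| = \sup_{m\ge 0}\sup_{D_{R_1}}|v(\lambda q^m,z)|/(H^m q^{m^2/2})$ (or a continuous-sector analogue), verify that multiplication by the $\xi$-powers coming from the coefficients, together with the $q$-dilation $\xi\mapsto q^{\pm 1}\xi$ induced by $\sigma_q$ and by multiplication by $t$, act boundedly and with small norm once $R_1$ is small and $H$ is large, and that the $\partial_z^\alpha$ acting on $D_{R_1}$ loses a controlled constant (Cauchy estimates) which is absorbed by the positive $\xi$-power in front of it. Because the data $F$ and the initial part of $u$ are already holomorphic near the origin, they lie in this space, and the fixed point $u^*$ is the desired extension; restricting to $\xi=\lambda q^m$ gives precisely (\ref{2.7}).

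The main obstacle, as usual in these $q$-summability arguments, is the bookkeeping of the $q$-powers: one must track how $q^{j(j-1)/2}$, $q^{k(k-1)/2}$, and the dilations $q^{\pm m}$ combine so that the left-hand side genuinely produces the polynomial $P$ of (\ref{2.4}) (hence the correct singular set $S$) and so that the right-hand side operator is $q$-exponentially small of the right order; getting the exponents $m^2/2$ versus $m(m-1)/2$ to match and verifying that the contraction constant does not secretly blow up with $m$ is the delicate point. A secondary technical issue is ensuring the lower bound $|P(\xi,z)|\ge c>0$ holds uniformly along the whole half-line $\{\lambda q^m : m\ge 0\}$ and not just near the origin — since $P$ is a polynomial in $\xi$ of positive degree, $|P(\lambda q^m,z)|$ actually grows like $q^{m(m-m_0)}$ or so, which only helps, but one has to phrase the estimate so that this growth is used correctly to dominate, rather than being cancelled by, the $q$-powers appearing elsewhere. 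Once the norm is chosen to encode these cancellations, the argument is a routine (if lengthy) majorant estimate, and I would expect this to be essentially the structure of the author's proof.
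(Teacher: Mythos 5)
Your plan is not the paper's argument, and it leaves the central difficulty unproved. The paper does not redo any Borel-plane fixed-point analysis: its entire proof of Proposition \ref{Proposition 2.4} is a reduction. One sets $q_1=q^{1/4}$, substitutes $t\mapsto t^2$ in (\ref{1.1}) (Lemma \ref{Lemma 3.1}), and observes that all orders of vanishing double, so the transformed equation (\ref{4.1}) satisfies $\mathrm{ord}_t(A_{j,\alpha})\geq 2j-2m_0+2$ for $|\alpha|>0$, i.e.\ exactly the old extra hypothesis (\ref{2.5}) relative to the doubled Newton polygon. Hence part (2) of Theorem \ref{Theorem 2.2} (more precisely Proposition 5.6 of \cite{yama}) applies verbatim to (\ref{4.1}), giving the continuation $u_1^*$ and the bounds of Proposition \ref{Proposition 4.1}; Proposition \ref{Proposition 3.2} converts these into the growth estimate (\ref{4.6}), and the change of variable $u^*(\xi,z)=u_1^*(q^{1/8}\xi^{1/2},z)$ pulls everything back to give (\ref{2.7}). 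The only new idea is this quadratic ramification, which turns the one-unit interior gap guaranteed by (A${}_2$) into the two-unit gap that the known machinery requires.

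The gap in your proposal is precisely at the point you declare routine. You assert that the bound $\mathrm{ord}_t(a_{j,\alpha})\geq j-m_0+1$ for $|\alpha|>0$ (Lemma \ref{Lemma 2.1}, i.e.\ condition (A${}_2$)) already suffices to make the weighted contraction/majorant scheme on the Borel side close with the weight $H^mq^{m^2/2}$, and that this ``is what makes the new argument work without (\ref{2.5})''. But (A${}_2$) was already assumed in \cite{yama}, and the direct scheme you describe is essentially the one carried out there --- which nevertheless needed the stronger condition (\ref{2.5}), one more order of vanishing, to absorb the Cauchy/Nagumo losses coming from $\partial_z^\alpha$ against the dilations $\xi\mapsto q^{\pm1}\xi$ and the growth of $P(\lambda q^m,z)$. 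Showing that a single extra power of $\xi$ in front of the $\partial_z^\alpha$-terms is enough is exactly the content of Theorem \ref{Theorem 2.3}, and you give no estimate for it; you only name the delicate exponent bookkeeping and then assume it works out. So as written the proposal begs the question the paper is about; either you must actually carry out the refined majorant estimates under the weaker vanishing (which may or may not be feasible, but is not in your text), or use some device --- such as the paper's $t\mapsto t^2$, $q_1=q^{1/4}$ trick --- that restores the two-unit gap and lets the already-proved result be cited.
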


%
%
%
\section{Some lemmas}\label{section 3}

   Before the proof of Proposition \ref{Proposition 2.4}, 
let us give some lemmas which are needed in the proof of 
Proposition \ref{Proposition 2.4}.
\par
   The following is the key lemma of the proof of 
Proposition \ref{Proposition 2.4}.

\begin{lem}\label{Lemma 3.1} 
   Let $q>1$.  Let $f(t,z)$ be a function in $(t,z)$. 
\par
   {\rm (1)} We have 
$\sigma_q(f)(t,z)=(\sigma_{\sqrt{q}})^2(f)(t,z)$.
\par
   {\rm (2)} We set $F(t,z)=f(t^2,z)$: then
we have  $\sigma_q(f)(t^2,z)=\sigma_{\sqrt{q}}(F)(t,z)$.
Similarly, we have
$(\sigma_q)^m(f)(t^2,z)=(\sigma_{\sqrt{q}})^m(F)(t,z)$
for any $m=1,2,\ldots$.
\end{lem}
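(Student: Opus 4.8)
\textbf{Proof proposal for Lemma \ref{Lemma 3.1}.}

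The plan is to verify both statements by a direct unwinding of the definitions of $\sigma_q$ and $\sigma_{\sqrt{q}}$, using nothing beyond the elementary identity $(\sqrt{q})^2=q$ and, in part (2), the bookkeeping involved in substituting $t^2$ for the first argument of $f$. For part (1), I would start from $\sigma_q(f)(t,z)=f(qt,z)$ and observe that applying $\sigma_{\sqrt{q}}$ twice gives $(\sigma_{\sqrt{q}})^2(f)(t,z)=\sigma_{\sqrt{q}}\bigl(\sigma_{\sqrt{q}}(f)\bigr)(t,z)=\sigma_{\sqrt{q}}(f)(\sqrt{q}\,t,z)=f(\sqrt{q}\cdot\sqrt{q}\,t,z)=f(qt,z)$, which is exactly $\sigma_q(f)(t,z)$. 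This is immediate and requires no hypotheses on $f$ beyond its being a function of $(t,z)$.

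For part (2), the key point is to track carefully the meaning of the symbol $\sigma_q(f)(t^2,z)$: it denotes the function obtained by first forming $g(s,z):=\sigma_q(f)(s,z)=f(qs,z)$ and then substituting $s=t^2$, giving $f(qt^2,z)$. On the other hand, with $F(t,z):=f(t^2,z)$ one has $\sigma_{\sqrt{q}}(F)(t,z)=F(\sqrt{q}\,t,z)=f\bigl((\sqrt{q}\,t)^2,z\bigr)=f(qt^2,z)$, so the two coincide. The iterated version follows by induction on $m$: assuming $(\sigma_q)^{m}(f)(t^2,z)=(\sigma_{\sqrt{q}})^{m}(F)(t,z)$, apply the single-step identity just proved to the function $(\sigma_q)^{m}(f)$ in place of $f$ (whose associated ``square-substituted'' function is precisely $(\sigma_{\sqrt{q}})^{m}(F)$ by the inductive hypothesis), obtaining $(\sigma_q)^{m+1}(f)(t^2,z)=\sigma_q\bigl((\sigma_q)^{m}(f)\bigr)(t^2,z)=\sigma_{\sqrt{q}}\bigl((\sigma_{\sqrt{q}})^{m}(F)\bigr)(t,z)=(\sigma_{\sqrt{q}})^{m+1}(F)(t,z)$.

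I do not anticipate a genuine obstacle here; the only thing one must be careful about is notational, namely distinguishing ``apply $\sigma_q$, then evaluate at $t^2$'' from ``evaluate at $t^2$, then apply $\sigma_{\sqrt q}$,'' and making sure the inductive step composes the operators in the correct order. A clean way to present this is to introduce once and for all the substitution map $\iota\colon g(s,z)\mapsto g(t^2,z)$ and note that the content of part (2) is the commutation relation $\iota\circ\sigma_q=\sigma_{\sqrt q}\circ\iota$, from which the $m$-fold statement is just $\iota\circ(\sigma_q)^m=(\sigma_{\sqrt q})^m\circ\iota$ obtained by iterating. This framing makes the induction essentially automatic and keeps the argument short.
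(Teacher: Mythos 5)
Your proposal is correct and follows essentially the same argument as the paper: unwind the definitions so that $\sigma_q(f)(t^2,z)=f(qt^2,z)=f((\sqrt{q}\,t)^2,z)=F(\sqrt{q}\,t,z)=\sigma_{\sqrt{q}}(F)(t,z)$, with part (1) immediate and the iterated identity obtained by repeating the one-step computation (your explicit induction, or commutation relation $\iota\circ\sigma_q=\sigma_{\sqrt q}\circ\iota$, just spells out what the paper dismisses as ``proved in the same way'').
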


\begin{proof}
    (1) is clear. (2) is verified as follows:
$\sigma_q(f)(t^2,z)=f(qt^2,z)= f((\sqrt{q}t)^2,z)$
      $= F(\sqrt{q}t,z) = \sigma_{\sqrt{q}}(F)(t,z)$. 
The equality 
$(\sigma_q)^m(f)(t^2,z)=(\sigma_{\sqrt{q}})^m(F)(t,z)$
can be proved in the same way.
\end{proof}

   The following result is proved in [Proposition 2.1 in 
\cite{ramis}]:

\begin{prop}\label{Proposition 3.2}
    Let $\hat{f}(t)= \sum_{n \geq 0}a_nt^n
\in \BC[[t]]$. The following two conditions are equivalent:
\par
   {\rm (1)} There are $A>0$ and $H>0$ such that
\[
         |a_n| \leq \dfrac{A H^n}{q^{n(n-1)/2}}, 
         \quad n=0,1,2,\ldots.  
\]
\par
   {\rm (2)} $\hat{f}(t)$ is the Taylor expansion at $t=0$ of
an entire function $f(t)$ satisfying the estimate
\[
     |f(t)| \leq M 
        \exp \Bigl( \dfrac{(\log |t|)^2}{2 \log q} 
                    + \alpha \log|t| \Bigr)
     \quad \mbox{on $\BC \setminus \{0\}$} 
\]
for some $M>0$ and $\alpha \in \BR$.
\end{prop}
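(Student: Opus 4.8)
The plan is to prove the two implications separately, observing that they are dual to one another through the elementary Legendre-type duality between the quadratic weight $q^{n(n-1)/2}$ in the coefficients and the quadratic exponent $(\log|t|)^2/(2\log q)$ in the growth. Throughout I write $Q=\log q>0$ and $r=|t|$, and I record the algebraic identity
\[
   \tfrac{Q}{2}(n-\alpha)^2
   = \tfrac{Q}{2}n(n-1)+\tfrac{Q}{2}(1-2\alpha)n+\tfrac{Q}{2}\alpha^2,
\]
which is what converts a Gaussian-in-$n$ bound into the shape demanded by condition (1).

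For the implication (2) $\Rightarrow$ (1), I would apply the Cauchy inequalities to the entire function $f$ on the circle $|t|=r$:
\[
   |a_n|\leq r^{-n}\sup_{|t|=r}|f(t)|
        \leq M\exp\Bigl(\tfrac{(\log r)^2}{2Q}+\alpha\log r-n\log r\Bigr),
        \qquad r>0.
\]
Since this holds for every $r>0$, I would optimize over $r$: setting $u=\log r$, the exponent is the convex parabola $u^2/(2Q)+(\alpha-n)u$ in $u$, minimized at $u=Q(n-\alpha)$ with minimal value $-\tfrac{Q}{2}(n-\alpha)^2$. Substituting this optimal radius and invoking the displayed identity rewrites the resulting bound $|a_n|\leq M\exp(-\tfrac{Q}{2}(n-\alpha)^2)$ exactly in the form $A H^n/q^{n(n-1)/2}$ with $H=q^{(2\alpha-1)/2}$ and $A=Mq^{-\alpha^2/2}$. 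This direction is short: it is essentially just Cauchy's estimate followed by a one-variable minimization.

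For (1) $\Rightarrow$ (2), I would estimate directly. First I note that the coefficient bound forces $|a_n|^{1/n}\to0$, so $f$ is entire; then, for $|t|=r$,
\[
   |f(t)|\leq\sum_{n\geq0}|a_n|r^n
          \leq A\sum_{n\geq0}\exp\bigl(\psi(n)\bigr),
   \qquad \psi(x)=x\log(Hr)-\tfrac{Q}{2}x(x-1).
\]
Viewing $\psi$ as a concave parabola in the real variable $x$, I would locate its maximum at $x^{\ast}=\tfrac12+\log(Hr)/Q$, where $\psi(x^{\ast})=\tfrac{(\log(Hr))^2}{2Q}+\tfrac{\log(Hr)}{2}+\tfrac{Q}{8}$; the leading term is $\tfrac{(\log r)^2}{2Q}$ and the remainder is linear in $\log r$ plus a constant, which fixes the value of $\alpha$. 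To pass from this peak value to the whole sum I would use $\psi(x)=\psi(x^{\ast})-\tfrac{Q}{2}(x-x^{\ast})^2$ and compare the terms to the integer $n_0$ nearest $x^{\ast}$, so that $\sum_{n\geq0}\exp(\psi(n))\leq\exp(\psi(x^{\ast}))\sum_{k\in\BZ}q^{-(|k|-1/2)^2/2}$; the last series is a finite theta-type constant depending only on $q$, which is absorbed into $M$.

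The main obstacle lies in this second direction: I must exhibit a single pair $(M,\alpha)$ valid on all of $\BC\setminus\{0\}$. The peak argument applies cleanly when $r$ is large (so that $x^{\ast}>0$); for $r$ in a bounded range it suffices that $|f|$ is bounded on compacta while the target bound is bounded below by a positive continuous function, and as $r\to0$ the term $(\log r)^2/(2Q)\to+\infty$ makes the target dominate the bounded value $|f(t)|\to|a_0|$ trivially. Reconciling these three regimes under one constant is the only point needing care; the heart of the matter—the duality between the two quadratic optimizations—is transparent, and it is exactly this duality that makes the two conditions equivalent.
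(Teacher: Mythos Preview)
The paper does not prove Proposition~3.2 at all: it simply quotes the statement and attributes it to Proposition~2.1 of Ramis~\cite{ramis}. Your proposal therefore supplies strictly more than the paper does, namely a self-contained elementary argument. The two directions you give---Cauchy's inequality followed by minimization over the radius for $(2)\Rightarrow(1)$, and a Laplace-type peak estimate on the majorizing theta-like series for $(1)\Rightarrow(2)$---are the natural ones, and your identification of the Legendre duality between the two quadratic weights is precisely the mechanism that makes the equivalence transparent.

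One small slip worth noting: in your bound
\[
   \sum_{n\geq0}\exp(\psi(n))\leq\exp(\psi(x^{\ast}))\sum_{k\in\BZ}q^{-(|k|-1/2)^2/2},
\]
the term $k=0$ on the right equals $q^{-1/8}<1$, whereas the corresponding left-hand contribution $\exp\bigl(\psi(n_0)-\psi(x^{\ast})\bigr)$ can be as large as $1$ (take $n_0=x^{\ast}$). The repair is immediate---bound the $k=0$ term separately by $1$, or use $\max(0,|k|-\tfrac12)^2$ in the exponent---and the rest of the argument, including your handling of the three regimes in $r$, is sound.
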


%
%
%
\section{Proof of Proposition \ref{Proposition 2.4}}
\label{section 4}

    We set $q_1=q^{1/4}$, replace $t$ by $t^2$ in (\ref{1.1}), 
and apply Lemma \ref{Lemma 3.1} to 
the equation (\ref{1.1}): then (\ref{1.1}) is rewritten into 
the form
\begin{equation}\label{4.1}
    \sum_{j+\delta |\alpha| \leq m}A_{j,\alpha}(t,z)
           (\sigma_{q_1})^{2j} \partial_z^{\alpha}Y
           = G(t,z) 
\end{equation}
where 
\begin{align*}
    &A_{j,\alpha}(t,z)= a_{j,\alpha}(t^2,z) \quad
            (j+\delta |\alpha| \leq m), \\
    &Y(t,z)=X(t^2,z) = \sum_{k \geq 0} X_k(z)t^{2k}, \\
    &G(t,z)= F(t^2,z).
\end{align*}
We can regards (\ref{4.1}) as a $q_1$-difference-differential 
equation, and in this case, the order of the equation is $2m$ 
in $t$.  Therefore, the $t$-Newton polygon $N_t(\ref{4.1})$ of 
(\ref{4.1}) (as a $q_1$-difference equation) is 
\[
        N_t(\ref{4.1})= \{(x,y) \in \BR^2 \,;\, x \leq 2m, \, 
                    y \geq \max\{0, x-2m_0 \} \}
\]
which is as in Figure \ref{Fig2}.

\begin{figure}[htbp]
\begin{center}
%
\includegraphics[scale=0.6]{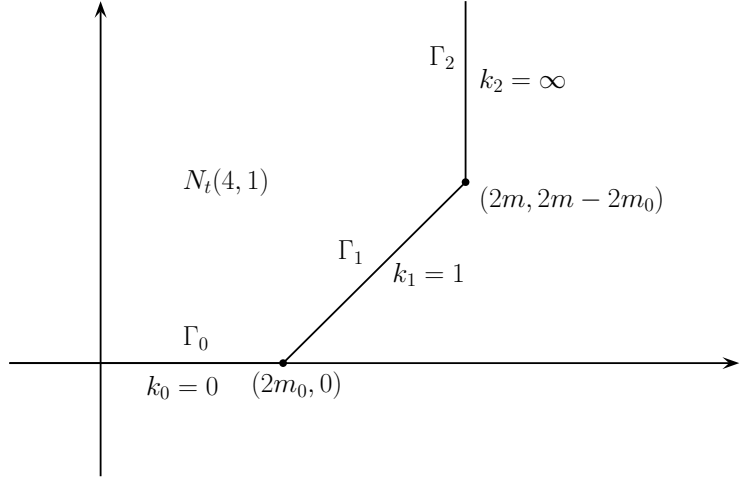}
%
%
\end{center}
\vspace{-3mm}
\caption{$t$-Newton polygon of (4.1)}\label{Fig2}
\end{figure}

Moreover, we have
\begin{equation}\label{4.2}
    \mathrm{ord}_t(A_{j,\alpha}) \geq 
    \left\{ \begin{array}{ll}
           \max \{0, 2j-2m_0 \}, &\mbox{if $|\alpha|=0$},\\[2pt]
           \max \{2, 2j-2m_0+2 \}, &\mbox{if $|\alpha|>0$}.
      \end{array}  \right. 
\end{equation}
By (\ref{2.2}) we have 
\[
    A_{j,0}(t,z)=t^{2j-2m_0} B_{j,0}(t,z) \quad 
           \mbox{for $m_0<j \leq m$}
\]
for $B_{j,0}(t,z)=b_{j,0}(t^2,z)$ ($m_0<j \leq m$). The set 
$S_1$ of singular directions of (\ref{4.1}) is defined by using
\begin{align*}
    P_1(\rho,z) &=\sum_{m_0<j \leq m}
            \frac{B_{j,0}(0,z)}{{q_1}^{2j(2j-1)/2}} \rho^{2j-2m_0}
             + \frac{A_{m_0,0}(0,z)}{{q_1}^{2m_0(2m_0-1)/2}} \\
   &= \sum_{m_0 <j \leq m}
            \frac{b_{j,0}(0,z)}{{q_1}^{2j(2j-1)/2}} \rho^{2j-2m_0}.
         + \frac{a_{m_0,0}(0,z)}{{q_1}^{2m_0(2m_0-1)/2}}.
\end{align*}
Let $\rho_1, \ldots, \rho_{2m-2m_0}$ be the roots of $P_1(\rho,0)=0$:
then $S_1$ is defined by
\[
     S_1 = \bigcup_{i=1}^{2m-2m_0}
       \{t= \rho_i \eta \,;\, \eta >0 \}.
\]
Let $u_1(\xi,x)$ be the $q_1$-formal Borel transform of $Y(t,x)$, 
that is, 
\[
     u_1(\xi,z)= \sum_{k \geq 0}
           \frac{X_k(z)}{{q_1}^{2k(2k-1)/2}} \xi^{2k}.
\]
Since $q_1=q^{1/4}$ we can easily see:
\begin{align}
    &u_1(\xi,z) = u(q^{-1/4}\xi^2,z), \label{4.3} \\
    &P_1(\lambda,z) = q^{-m_0/4} P(q^{-1/4} \lambda^2,z),
                    \label{4.4}
\end{align}
where $u(\xi,z)$ and $P(\tau,z)$ are the ones in (\ref{2.6}) and 
(\ref{2.4}), respectively.
\par
    By (\ref{4.3}) we see that $u_1(\xi,z)$ is convergent in a 
neighborhood of $(\xi,z)=(0,0)$. The equality (\ref{4.4}) implies 
that $\lambda \in \BC \setminus (\{0\} \cup S_1)$ is equivalent
to the condition $\lambda^2 \in \BC \setminus (\{0\} \cup S)$.
\par
   Since $\textrm{ord}_t(A_{j,\alpha}) \geq 2j-2m_0+2$ holds for
any $(j,\alpha)$ with $m_0 \leq j<m$ and $|\alpha|>0$, 
the $q_1$-difference equation (\ref{4.1}) satisfies the condition 
(\ref{2.5}) (with $j$, $m_0$, $m$ replaced by $2j$, $2m_0$, $2m$, 
respectively). Therefore, we can apply (2) of Theorem \ref{Theorem 2.2} 
and its proof to the equation (\ref{4.1}). 
\par
    In particular, by the proof of [Proposition 5.6 in \cite{yama}] 
we have

\begin{prop}\label{Proposition 4.1} 
   For any $\rho \in \BC \setminus (\{0\} \cup S_1)$ we can find
$\theta_1>0$ and $R_1>0$ which satisfy the following 
conditions {\rm (1)} and {\rm (2)}:
\par
   {\rm (1)} $u_1(\xi,z)$ has an analytic extension $u^*_1(\xi,z)$
to the domain $S_{\theta_1}(\rho) \times D_{R_1}$.
\par
   {\rm (2)} There are $\mu>0$ and holomorphic functions $w_n(\xi,z)$
{\rm (}$n \geq \mu${\rm )} on $S_{\theta_1}(\rho) \times D_{R_1}$ which satisfy
\begin{equation}\label{4.5}
      u^*_1(\xi,z)= \sum_{n \geq 2\mu}w_n(\xi,z)+
                 \sum_{0 \leq k <\mu}
               \frac{X_k(z)}{{q_1}^{2k(2k-1)}}\xi^{2k}
      \quad \mbox{on $S_{\theta_1}(\rho) \times D_{R_1}$}
\end{equation}
and
\[
     |w_n(\xi,z)| \leq \frac{AH^n |\xi|^n}{{q_1}^{n(n-1)/2}}
     \quad \mbox{on $S_{\theta_1}(\rho) \times D_{R_1}$},
    \quad n \geq 2\mu  
\]
for some $A>0$ and $H>0$.
\end{prop}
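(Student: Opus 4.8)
The plan is to obtain Proposition \ref{Proposition 4.1} by running the argument of \cite{yama} on the auxiliary $q_1$-difference-differential equation (\ref{4.1}). The point of passing from $t$ to $t^2$ was precisely that (\ref{4.1}) then satisfies the Newton-polygon condition (\ref{2.5}) with doubled exponents --- which is exactly what (\ref{4.2}) records for $|\alpha|>0$ --- so the hypotheses of Theorem \ref{Theorem 2.2}(2) and of its proof are in force for (\ref{4.1}). The first step is to apply the formal $q_1$-Borel transform in $t$ to (\ref{4.1}). This transform commutes with $\sigma_{q_1}$ and with each $\partial_z^{\alpha}$, it turns multiplication by $t^k$ into multiplication by $q_1^{-k(k-1)/2}\xi^k$ followed by a $q_1^{-k}$-dilation in $\xi$, and it sends $Y(t,z)=X(t^2,z)$ to $u_1(\xi,z)$; hence (\ref{4.1}) becomes a $q_1$-difference-differential equation for $u_1$ in which $\xi$ occurs only through multiplication and $q_1$-dilations, and whose principal part --- read off the critical edge of $N_t(\ref{4.1})$ --- is built from the polynomial $P_1(\xi,z)$ of Section \ref{section 4}.

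Fix $\rho\in\BC\setminus(\{0\}\cup S_1)$. The second step is to split the transformed equation into its principal part, governed by $P_1$, and a remainder which by (\ref{4.2}) consists only of terms of strictly higher $\xi$-order. Since $S_1$ is the union of the rays through the roots $\rho_i$ of $P_1(\cdot,0)$, no such root lies in the sector $S_{\theta_1}(\rho)$ once $\theta_1$ is small; together with $P_1(0,z)\neq 0$, the nonvanishing of the top coefficient of $P_1$ (both from (\ref{2.3})), and a shrinking of $R_1$, this gives $|P_1(\xi,z)|\ge c>0$ on $S_{\theta_1}(\rho)\times D_{R_1}$ (indeed it even grows like $|\xi|^{2m-2m_0}$ at infinity there). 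One then recasts the equation, after ``inverting'' the principal $q_1$-difference part, as a fixed-point equation $u_1=\mathcal{N}(u_1)$, and solves it by successive approximations in the space of holomorphic functions on $S_{\theta_1}(\rho)\times D_{R_1}$ dominated coefficientwise by a series $\sum AH^n|\xi|^n/q_1^{n(n-1)/2}$. The decisive feature is that in such a $q_1$-Gevrey-$1$ weighted norm ``multiplication by $t$'' in the Borel variable contracts by a factor $\sim H^{-1}$, so the remainder, having positive $t$-order, is contracting once $H$ is taken large.

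Peeling off from $u_1$ its truncation of degree $<2\mu$ --- the finitely many low-order terms displayed in (\ref{4.5}), which are already known from (\ref{2.6})--(\ref{4.3}) to be holomorphic near $\xi=0$ and which play the role of initial data --- the remaining part satisfies a modified fixed-point equation to which the contraction applies. The successive corrections generated by the iteration are exactly the functions $w_n$ ($n\ge 2\mu$), each inheriting the bound $|w_n(\xi,z)|\le AH^n|\xi|^n/q_1^{n(n-1)/2}$ on $S_{\theta_1}(\rho)\times D_{R_1}$, and $\sum_{n\ge 2\mu}w_n$ converges there; its sum plus the truncation is the analytic extension $u_1^{*}$ satisfying (\ref{4.5}). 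All of this is precisely what the proof of Proposition 5.6 in \cite{yama} carries out, so in practice one only needs to check the hypotheses of that proposition for (\ref{4.1}), i.e.\ (\ref{4.2}), and quote it.

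The hard part, as usual in this $q$-Borel framework, is to invert the principal $q_1$-difference operator (symbol $P_1$) while staying inside the $q_1$-Gevrey class of order $1$: one cannot simply multiply by $P_1^{-1}$, because multiplication by a general holomorphic function is unbounded in the weighted norm, so the inversion must exploit the $q_1$-shift structure of the symbol and must be uniform over the \emph{unbounded} sector $S_{\theta_1}(\rho)$ and over $z\in D_{R_1}$. Simultaneously one must keep the $\partial_z^{\alpha}$-factors under control (Cauchy estimates, absorbed by a further shrinking of $R_1$ through majorant series or Nagumo-type norms) and track how the shifts $(\sigma_{q_1})^{2j}$ interact with the weight $q_1^{-n(n-1)/2}$. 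It is the sharpened interior condition (\ref{4.2}) for $|\alpha|>0$ that makes the perturbation genuinely raise the $\xi$-order, so that the Neumann series for $\mathcal{N}$ converges and the $q_1$-Gevrey-$1$ estimates are reproduced rather than degraded; arranging this balance uniformly is where essentially all the effort lies.
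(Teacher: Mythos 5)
Your proposal is correct and follows essentially the same route as the paper: the paper's argument likewise consists of checking that the doubled equation (\ref{4.1}) satisfies the interior condition (\ref{4.2}) (i.e.\ condition (\ref{2.5}) with $j$, $m_0$, $m$ replaced by $2j$, $2m_0$, $2m$) and then invoking the proof of Proposition 5.6 of \cite{yama} applied to (\ref{4.1}), with $\rho\notin S_1$ guaranteeing the nonvanishing of $P_1$ on the sector. Your reconstruction of the internals of that cited proof ($q_1$-Borel transform, inversion of the principal symbol on $S_{\theta_1}(\rho)\times D_{R_1}$, iteration in a $q_1$-Gevrey weighted norm producing the $w_n$) is consistent with what is being quoted, so nothing further is needed.
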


   Therefore, by applying Proposition \ref{Proposition 3.2} to 
(\ref{4.5}) we have the estimate
\begin{equation}\label{4.6}
   |u_1^*(\xi,x)| \leq M 
       \exp \Bigl( \frac{(\log |\xi|)^2}{2 \log q_1}
                + \alpha \log|\xi| \Bigr)
   \quad \mbox{on $S_{\theta_1}(\rho)
              \times D_{R_1}$}
\end{equation}
for some $M>0$ and $\alpha \in \BR$.

\begin{proof}[Completion of the proof of 
                        Proposition \ref{Proposition 2.4}]
    Take any $\lambda=r e^{\sqrt{-1} \theta} \in 
              \BC \setminus (\{0\} \cup S)$. We set
$\rho= \sqrt{r} e^{\sqrt{-1} \theta/2}$: then
we have $\rho \in \BC \setminus (\{0\} \cup S_1)$.
Therefore, by Proposition \ref{Proposition 4.1} we can get 
$\theta_1>0$, $R_1>0$, $M>0$ and $\alpha \in \BR$ such that 
$u_1(\xi,z)$ has an analytic extension $u_1^*(\xi,z)$ 
to the domain $S_{\theta_1}(\rho) \times D_{R_1}$ satisfying 
the estimate (\ref{4.6}) on $S_{\theta_1}(\rho) \times D_{R_1}$.
\par
   Since $u_1(\xi,z)= u(q^{-1/4} \xi^2,z)$ holds, this shows that
$u(\xi,z)$ has also an analytic continuation $u^*(\xi,x)$ to the domain
$S_{\theta}(\lambda) \times D_{R_1}$ (with $\theta=2 \theta_1$), and
we have $u^*(\xi,z)=u_1^*(q^{1/8} \xi^{1/2},z)$ on 
$S_{\theta}(\lambda) \times D_{R_1}$. Therefore, by (\ref{4.6}) 
we have the estimate
\begin{align*}
   |u^*(\xi,x)| &\leq M \exp \Bigl( 
       \frac{(\log (q^{1/8}|\xi|^{1/2}))^2}{2 \log q^{1/4}}
         + \alpha \log (q^{1/8}|\xi|^{1/2}) \Bigr) \\
     &= M_1 |\xi|^{\beta} \exp 
        \Bigl( \frac{(\log |\xi|)^2}{2 \log q} \Bigr) 
     \quad \mbox{on $S_{\theta}(\lambda) \times D_{R_1}$}
\end{align*}
(with $M=M_1 q^{1/32+\alpha/8}$ and $\beta=1/4+\alpha/2$).
\par
   Thus, by setting $\xi=\lambda q^m$ we obtain
\begin{align*}
   |u^*(\lambda q^m,x)| &\leq M_1 |\lambda q^m|^{\beta} \exp 
        \Bigl( \frac{(\log |\lambda q^m|)^2}{2 \log q} \Bigr) \\
    &= M_1 |\lambda|^{\beta}
       \exp \Bigl( \frac{(\log |\lambda|)^2}{2 \log q}
          \Bigr) (|\lambda| q^{\beta})^m q^{m^2/2},
         \quad m=0,1,2,\ldots.
\end{align*}
This proves (2.7). 
\end{proof}

%
%

%
%
\end{document}